\date{}
\newlength{\defbaselineskip}
\newcommand{\setlinespacing}[1]%
           {\setlength{\baselineskip}{#1 \defbaselineskip}}
\newcommand{\N}{{\mathbb{N}}}
\newcommand{\actaqed}{\hfill $\actabox$}
{\medskip\noindent \textit{Proof of #1. }}%
{\actaqed \medskip}
\def\cB{\mathcal B}
\def\cC{{\mathcal C}}
\def \Tr{\mathcal T}
\def \K{\mathcal K}
\def \V{\mathcal V}
\def \cX{\mathcal X}
\def \cM{\mathcal M}
\def\R{{\mathbb R}}
\def\Z{\mathbb Z}
\def \T{\mathbb T}
\def \<{\langle}
\def\>{\rangle}
\def\ba{\mathbf a}
\def\bx{\mathbf x}
\def\by{\mathbf y}
\def\bk{\mathbf k}
\def\bu{\mathbf u}
\def\bv{\mathbf v}
\def\bw{\mathbf w}
\def\bn{\mathbf n}
\def\bs{\mathbf s}
\def\br{\mathbf r}
\def\bbe{\mathbf e}
\def\bN{\mathbf N}
\def\bK{\mathbf K}
\newtheorem{Theorem}{Theorem}[section]
\newtheorem{Lemma}{Lemma}[section]
\newtheorem{Definition}{Definition}[section]
\newtheorem{Proposition}{Proposition}[section]
\numberwithin{equation}{section}
\newcommand{\be}{\begin{equation}}
\newcommand{\ee}{\end{equation}}
\begin{document}

\title{Universal discretization}
\author{V.N. Temlyakov\thanks{University of South Carolina and Steklov Institute of Mathematics.  }}
\maketitle
\begin{abstract}
{The paper is devoted to discretization of integral norms of functions from
a given collection of finite dimensional subspaces. For natural collections of subspaces of the multivariate trigonometric polynomials we construct sets of points, which are optimally (in the sense of order) good for each subspace of a collection from the point of view of the integral norm discretization. We call such sets universal. Our construction of the universal sets is based on deep results on 
 existence of special nets, known as $(t,r,d)$-nets. }
\end{abstract}

\section{Introduction} 
\label{I} 

This paper is a follow up to the two recent papers \cite{VT160} and \cite{VT161}. As it is clear from the title the two main concepts of the paper are {\it discretization} and {\it universality}.  Discretization is 
a very important step in making a continuous problem computationally feasible. The problem of construction of good sets of points in a multidimensional domain is a fundamental problem of mathematics and computational mathematics. 
A prominent  example of classical discretization problem is a problem of metric entropy (covering numbers, entropy numbers). 
Bounds for the entropy numbers of function classes are important by themselves and also have important connections to other fundamental problems (see, for instance, \cite{Tbook}, Ch.3 and \cite{DTU}, Ch.6). 
Another prominent example of a discretization problem is the problem of numerical integration.  Numerical integration in the mixed smoothness classes requires deep number theoretical results for constructing optimal (in the sense of order) cubature formulas (see, for instance, \cite{DTU}, Ch.8). A typical approach to solving a continuous problem numerically -- the Galerkin method -- 
suggests to look for an approximate solution from a given finite dimensional subspace. A standard way to measure an error of approximation is an appropriate $L_q$ norm, $1\le q\le\infty$. Thus, the problem of   discretization of the $L_q$ norms of functions from a given finite dimensional subspace arises in a very natural way.

We now discuss the universality principle formulated in \cite{Tem16}.  The
concept of smoothness becomes more complicated in the multivariate case than it
is in the univariate case. In the multivariate case a function may have
different smoothness properties in different coordinate directions. In other
words, functions may belong to different anisotropic smoothness classes (see anisotropic Sobolev and 
 Nikol'skii 
classes $W^\br_{q,\alpha}$ and $H^\br_q$ in \cite{TBook}, Chapter 2). It is known   that approximation characteristics of
anisotropic smoothness classes depend on the average smoothness $g(\br)$ and optimal
approximation methods depend on anisotropy of classes, on the vector $\br$. This motivated a study in
\cite{Tem16} of existence of an approximation method that is good for all
anisotropic smoothness classes. This is a problem of existence of a universal
method of approximation.  There is a similar setting for existence of universal cubature formulas (see, for instance, \cite{TBook}, Chapter 4).
We note that the universality concept in learning
theory is very important and it is close to the concepts of adaptation and
distribution-free estimation in non-parametric statistics (\cite{GKKW},
\cite{BCDDT}, \cite{VT113}). 

We now proceed to the detailed presentation. 

{\bf Marcinkiewicz problem.} Let $\Omega$ be a compact subset of $\R^d$ with the probability measure $\mu$. We say that a linear subspace $X_N$ of the $L_q(\Omega)$, $1\le q < \infty$, admits the Marcinkiewicz-type discretization theorem with parameters $m$ and $q$ if there exist a set $\{\xi^\nu \in \Omega, \nu=1,\dots,m\}$ and two positive constants $C_j(d,q)$, $j=1,2$, such that for any $f\in X_N$ we have
\be\label{1.1}
C_1(d,q)\|f\|_q^q \le \frac{1}{m} \sum_{\nu=1}^m |f(\xi^\nu)|^q \le C_2(d,q)\|f\|_q^q.
\ee
In the case $q=\infty$ we define $L_\infty$ as the space of continuous on $\Omega$ functions and ask for 
\be\label{1.2}
C_1(d)\|f\|_\infty \le \max_{1\le\nu\le m} |f(\xi^\nu)| \le  \|f\|_\infty.
\ee
We will also use a brief way to express the above property: the $\cM(m,q)$ theorem holds for  a subspace $X_N$ or $X_N \in \cM(m,q)$. 

{\bf Universal discretization problem.} This problem is about finding (proving existence) of 
a set of points, which is good in the sense of the above Marcinkiewicz-type discretization 
for a collection of linear subspaces. We formulate it in an explicit form. Let $\cX_N:= \{X_N^j\}_{j=1}^k$ be a collection of linear subspaces $X_N^j$ of the $L_q(\Omega)$, $1\le q \le \infty$. We say that a set $\{\xi^\nu \in \Omega, \nu=1,\dots,m\}$ provides {\it universal discretization} for the collection $\cX_N$ if, in the case $1\le q<\infty$, there are two positive constants $C_i(d,q)$, $i=1,2$, such that for each $j\in [1,k]$ and any $f\in X_N^j$ we have
\be\label{1.1u}
C_1(d,q)\|f\|_q^q \le \frac{1}{m} \sum_{\nu=1}^m |f(\xi^\nu)|^q \le C_2(d,q)\|f\|_q^q.
\ee
In the case $q=\infty$  for each $j\in [1,k]$ and any $f\in X_N^j$ we have
\be\label{1.2u}
C_1(d)\|f\|_\infty \le \max_{1\le\nu\le m} |f(\xi^\nu)| \le  \|f\|_\infty.
\ee

We study the  Universal discretization for subspaces of the 
trigonometric polynomials. Let $Q$ be a finite subset of $\Z^d$. We denote
$$
\Tr(Q):= \{f: f=\sum_{\bk\in Q}c_\bk e^{i(\bk,\bx)}\}.
$$
We briefly present well known results related to the Marcinkiewicz-type discretization theorems for the trigonometric polynomials. We begin with the case $Q=\Pi(\bN):=[-N_1,N_1]\times \cdots \times [-N_d,N_d]$, $N_j \in \N$ or $N_j=0$, $j=1,\dots,d$, $\bN=(N_1,\dots,N_d)$. 
We denote
\begin{align*}
P(\mathbf N) := \bigl\{\mathbf n = (n_1 ,\dots,n_d),&\qquad n_j\ -\
\text{ are nonnegative integers},\\
&0\le n_j\le 2N_j  ,\qquad j = 1,\dots,d \bigr\},
\end{align*}
and set
$$
\bx^{\mathbf n}:=\left(\frac{2\pi n_1}{2N_1+1},\dots,\frac{2\pi n_d}
{2N_d+1}\right),\qquad \mathbf n\in P(\mathbf N) .
$$
Then for any $t\in \Tr(\Pi(\mathbf N))$
\be\label{b1.5}
\|t\|_2^2  =\vartheta(\mathbf N)^{-1}\sum_{\mathbf n\in P(\mathbf N)}
\bigl|t(\bx^{\mathbf n})\bigr|^2   
\ee
where $\vartheta(\mathbf N) := \prod_{j=1}^d (2N_j  + 1)=\dim\Tr(\Pi(\bN))$.  
The following version of (\ref{b1.5}) for $1<q<\infty$ is the well known Marcinkiewicz discretization theorem (for $d=1$) (see \cite{Z}, Ch.10, \S7 and \cite{TBook}, Ch.1, Section 2)
$$
C_1(d,q)\|t\|_q^q  \le\vartheta(\mathbf N)^{-1}\sum_{\mathbf n\in P(\mathbf N)}
\bigl|t(\bx^{\mathbf n})\bigr|^q  \le C_2(d,q)\|t\|_q^q,\quad 1<q<\infty,
$$
which implies  
\be\label{1.4}
\Tr(\Pi(\bN)) \in \cM(\vartheta(\bN),q),\quad 1<q<\infty.
\ee
Some modifications are needed in the case $q=1$ or $q=\infty$. 
Denote
\begin{align*}
P'(\mathbf N) := \bigl\{\mathbf n &= (n_1,\dots,n_d),\qquad n_j\ -\
\text{ are natural numbers},\\
&0\le n_j\le 4N_j -1 ,\qquad j = 1,\dots,d \bigr\}
\end{align*}
and set
$$
\bx(\mathbf n) :=\left (\frac{\pi n_1}{2N_1} ,\dots,\frac{\pi n_d}{2N_d}
\right)   ,\qquad \mathbf n\in P'(\mathbf N)  .
$$
In the case $N_j  = 0$ we assume $x_j (\mathbf n) = 0$. Denote ${\overline N} := \max (N,1)$ and $v(\bN) := \prod_{j=1}^d {\overline N_j}$. Then the following Marcinkiewicz-type discretization theorem is known for all $1\le q\le \infty$
\be\label{1.4'}
C_1(d,q)\|t\|_q^q  \le v(4\mathbf N)^{-1}\sum_{\mathbf n\in P'(\mathbf N)}
\bigl|t(\bx({\mathbf n}))\bigr|^q  \le C_2(d,q)\|t\|_q^q,
\ee
which implies the following relation
\be\label{1.5}
\Tr(\Pi(\bN)) \in \cM(v(4\bN),q),\quad 1\le q\le \infty.
\ee
Note that $v(4\bN) \le C(d) \dim \Tr(\Pi(\bN))$. 

It is clear from the above construction that 
the set $\{\bx(\bn): \bn\in P'(\bN)$ depends substantially on $\bN$. The main goal of this paper is to construct for a given $q$ and $M$ a set, which satisfies an analog of (\ref{1.4'}) for all $\bN$ with $v(\bN)\le M$. 
In this paper we are primarily interested in the Universal discretization  for the collection of subspaces of trigonometric polynomials with frequencies from parallelepipeds (rectangles). For $\bs\in\Z^d_+$
define
$$
R(\bs) := \{\bk \in \Z^d :   |k_j| < 2^{s_j}, \quad j=1,\dots,d\}.
$$
Clearly, $R(\bs) = \Pi(\bN)$ with $N_j = 2^{s_j}-1$. Consider the collection $\cC(n,d):= \{\Tr(R(\bs)), \|\bs\|_1=n\}$. 

We prove here the following result.
\begin{Theorem}\label{T1.1}   For every $1\le q\le\infty$ there exists a large enough constant $C(d,q)$, which depends only on $d$ and $q$, such that for any $n\in \N$ there is a set $\Xi_m:=\{\xi^\nu\}_{\nu=1}^m\subset \T^d$, with $m\le C(d,q)2^n$ that provides universal discretization in $L_q$   for the collection $\cC(n,d)$.
\end{Theorem}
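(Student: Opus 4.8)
The plan is to build a single point set that simultaneously discretizes all the subspaces $\Tr(R(\bs))$ with $\|\bs\|_1 = n$, by replacing the parallelepiped-adapted grids $\{\bx(\bn)\}$ used in \eqref{1.4'} with a sufficiently "rich" deterministic point configuration. The key observation is that each $R(\bs)$ is a dyadic box $\Pi(\bN(\bs))$ with $N_j(\bs) = 2^{s_j}-1$, and $v(\bN(\bs)) \asymp 2^{s_j}$ per coordinate, so $\dim \Tr(R(\bs)) \asymp 2^n$ uniformly over $\|\bs\|_1 = n$. Thus a universal set of size $O_{d,q}(2^n)$ is of optimal order for every subspace in the collection simultaneously; no single subspace forces more points than the target, and the only issue is covering all $\binom{n+d-1}{d-1}$ choices of $\bs$ at once.

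First I would recall the mechanism behind \eqref{1.4'}: the grid $\{\bx(\bn)\}$ for a box $\Pi(\bN)$ is essentially a product of equally spaced one-dimensional grids whose $j$-th factor resolves frequencies up to $\asymp N_j$. The natural substitute that works for all $\bs$ at once is a $(t,r,d)$-net adapted to the dyadic scale $n$: such a net, by its defining property, distributes points so that every dyadic box of volume $2^{-(r-t)}$ (in the appropriate anisotropic dyadic decomposition of $\T^d$) receives the correct proportion of points. Concretely, I would take $m \asymp 2^n$ points forming a $(t,n,d)$-net (with $t = t(d)$ the bounded quality parameter guaranteed by the deep existence results cited in the abstract), and then show that for each fixed $\bs$ with $\|\bs\|_1 = n$, the net, when grouped according to the partition of $\T^d$ into translates of the box dual to $R(\bs)$, behaves like the adapted grid $\{\bx(\bn)\}$ up to constants depending only on $d$ and $t$.

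The heart of the argument is to pass from the net property (an equidistribution statement about counting points in dyadic boxes) to the two-sided $L_q$ inequality \eqref{1.1u} for each $\Tr(R(\bs))$. For this I would use the standard route for Marcinkiewicz-type theorems: (i) the upper bound, $\frac{1}{m}\sum_\nu |t(\xi^\nu)|^q \le C_2 \|t\|_q^q$, follows from a Bernstein-type/Nikol'skii inequality together with the fact that on each cell of the $R(\bs)$-adapted partition the polynomial $t \in \Tr(R(\bs))$ varies by a bounded factor, so a sum over net points in that cell is comparable to the integral over the cell; (ii) the lower bound is the more delicate half and is obtained by the same cell-by-cell comparison combined with the exact quadrature identity \eqref{b1.5} (valid in $L_2$ for the adapted grid) upgraded to $L_q$ via the already-cited one-dimensional Marcinkiewicz theorem applied coordinatewise, or alternatively by a direct kernel estimate showing $\|t\|_q \le C \bigl(\frac1m \sum_\nu |t(\xi^\nu)|^q\bigr)^{1/q}$ using that the Dirichlet-type reproducing kernel for $\Tr(R(\bs))$ is well localized at scale matching the net. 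The endpoint cases $q=1$ and $q=\infty$ are handled as in \eqref{1.4'}, by working with the refined grid $P'(\bN)$ analogue, i.e. taking the net at scale $n + c(d)$ rather than $n$ so that each $R(\bs)$-cell contains enough net points to control the polynomial in the sup norm.

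The main obstacle I anticipate is step (iii): verifying uniformly in $\bs$ that a single $(t,n,d)$-net, which is equidistributed with respect to the \emph{isotropic} dyadic hierarchy, is also equidistributed with respect to each of the \emph{anisotropic} dyadic partitions determined by $R(\bs)$ with $\|\bs\|_1 = n$. This is precisely where the $(t,r,d)$-net theory must be invoked in its strong form — one needs that the net resolves every anisotropic box of volume $\asymp 2^{-n}$ whose side lengths are powers of two, not just the axis-aligned isotropic ones — and establishing that the constants in the resulting discretization inequalities do not degrade as the aspect ratio of $R(\bs)$ grows (which it does, up to $2^n$ in a single coordinate when $\bs$ is concentrated) is the crux of the matter. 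Once that uniform equidistribution is in hand, the rest is the routine cell-by-cell comparison sketched above, yielding $C_1(d,q)$ and $C_2(d,q)$ independent of $n$ and of the particular subspace in $\cC(n,d)$.
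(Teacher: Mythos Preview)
Your overall plan matches the paper's: take a $(t,r,d)$-net scaled to $\T^d$ as the universal set, and verify the two-sided $L_q$ inequality for each $\Tr(R(\bs))$ by comparing net points to the adapted grid $\{\bx(\bn)\}$ cell by cell. However, your identification of the ``main obstacle'' is inverted. By Definition~\ref{D2.1}, a $(t,r,d)$-net places \emph{exactly} $2^t$ points in every dyadic box $\prod_j [(a_j-1)2^{-s_j}, a_j 2^{-s_j})$ with $\sum_j s_j = r-t$, for \emph{arbitrary} nonnegative integers $s_j$; the anisotropic equidistribution you worry about is the very definition of the net, not a consequence to be extracted, and the constants cannot degrade with the aspect ratio because the point count per cell is exact. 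So your step~(iii) is free once one knows such nets exist (they do, with $t\le t(d)$, by \cite{NX}).

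The genuine gap is in step~(i). The claim that a polynomial in $\Tr(R(\bs))$ ``varies by a bounded factor'' on a cell of the adapted partition is false (it can vanish there), and the natural repair --- bounding $\bigl||f(\xi^\nu)|^q - |f(\gamma^\nu)|^q\bigr|$ via the mean value theorem and Bernstein, as in the paper's Lemma~\ref{L3.4} --- produces error terms of the form $\frac{1}{m}\sum_\nu |f'_{x_j}(\alpha^\nu)|^q$ at intermediate points $\alpha^\nu$, and controlling these by $\|f'_{x_j}\|_q^q$ \emph{already presupposes the upper discretization inequality} for the (uncontrolled) set $\{\alpha^\nu\}$. The paper breaks this circularity by first proving the upper bound independently (Theorem~\ref{T3.2}): it uses the reproducing identity $f = f * \V_\bN$ for $f\in\Tr(\bN)$ (note $\V_\bN$, not the Dirichlet kernel, which is not $L_1$-localized), a duality step, and Riesz--Thorin interpolation (Lemma~\ref{L3.3}) between the trivial $\ell_1\to L_1$ bound and an $\ell_\infty\to L_\infty$ bound coming from the pointwise majorant of $\V_\bN$; only the weak hypothesis ``at most $b$ points per cell'' is needed here. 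With the upper bound in hand, the perturbation Lemma~\ref{L3.4} then legitimately transfers the known lower bound \eqref{1.4'} from the grid to the net. Your sketch runs these two halves in parallel, but they must be done in this order, and the kernel argument belongs to the upper bound, not (as you suggest) the lower one.
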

Theorem \ref{T1.1}, basically, solves the Universal discretization problem for the collection $\cC(n,d)$. It provides the upper bound  $m\le C(d,q)2^n$ with $2^n$ being of the order of the dimension of each $\Tr(R(\bs))$ from the collection $\cC(n,d)$.
 Obviously, the lower bound for the cardinality of a set, providing the Marcinkiewicz discretization theorem for $\Tr(R(\bs))$ with $\|\bs\|_1=n$, is $\ge C(d)2^n$. 
We treat separately the case $q=\infty$ (see Section \ref{Inf}) and the case $1\le q<\infty$ (see Section \ref{Lq}). Our construction of the universal set is based on deep results on 
 existence of special nets, known as $(t,r,d)$-nets (see Section \ref{Inf} for the definition).

\section{Universal discretization in $L_\infty$} 
\label{Inf}

We begin with a simple observation. Denote for brevity $\Tr(\bN) := \Tr(\Pi(\bN))$, $\T^d := [0,2\pi)^d$. 

\begin{Proposition}\label{P2.1} Let a set $(\xi^1,\dots,\xi^m) \subset \T^d$ be such that for any $\bx \in \T^d$ there exists $\xi^{\nu(\bx)}$ such that
\be\label{2.1}
|x_j-\xi_j^{\nu(\bx)}| \le (2dN_j)^{-1}, \quad j=1,\dots,d.
\ee
Then for each $f\in \Tr(\bN)$, $\bN \in \N^d$, we have 
$$
\|f\|_\infty \le 2\max_{1\le \nu \le m} |f(\xi^\nu)|.
$$
\end{Proposition}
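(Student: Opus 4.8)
The plan is to use a standard Bernstein-type argument controlling the gradient of a trigonometric polynomial and then a mean value / fundamental theorem of calculus estimate along line segments. The key point is that any $f\in\Tr(\bN)$ has the property that $\partial f/\partial x_j$ is again a trigonometric polynomial with frequencies in $\Pi(\bN)$, and the Bernstein inequality in $L_\infty$ gives $\|\partial f/\partial x_j\|_\infty \le N_j\|f\|_\infty$. This is exactly the bound that matches the mesh size $(2dN_j)^{-1}$ in hypothesis \eqref{2.1}.

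First I would fix $\bx^*\in\T^d$ at which $|f|$ attains its maximum, so $\|f\|_\infty = |f(\bx^*)|$; by hypothesis there is a point $\xi:=\xi^{\nu(\bx^*)}$ with $|x_j^*-\xi_j|\le (2dN_j)^{-1}$ for each $j$. Next I would estimate the difference $|f(\bx^*)-f(\xi)|$ by walking from $\xi$ to $\bx^*$ one coordinate at a time (or along the straight segment): writing $f(\bx^*)-f(\xi)$ as a sum of $d$ telescoping terms, each of which is an integral of one partial derivative over an interval of length at most $(2dN_j)^{-1}$. Using Bernstein's inequality $\|\partial f/\partial x_j\|_\infty \le N_j\|f\|_\infty$ on each term, each contributes at most $N_j\cdot (2dN_j)^{-1}\|f\|_\infty = \tfrac{1}{2d}\|f\|_\infty$, so the total is at most $\tfrac{d}{2d}\|f\|_\infty = \tfrac12\|f\|_\infty$.

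Therefore $|f(\xi)| \ge |f(\bx^*)| - \tfrac12\|f\|_\infty = \tfrac12\|f\|_\infty$, which gives $\|f\|_\infty \le 2|f(\xi)| \le 2\max_{1\le\nu\le m}|f(\xi^\nu)|$, as claimed. (The reverse inequality $\max_\nu|f(\xi^\nu)|\le\|f\|_\infty$ is trivial since all $\xi^\nu\in\T^d$, though it is not actually asserted in this Proposition.)

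The only genuine ingredient is Bernstein's inequality for multivariate trigonometric polynomials in the $L_\infty$ norm, which is classical; the rest is a routine interpolation-of-a-chain-of-segments computation, and I would be careful only about bookkeeping the constant $2d$ in the denominator against the $d$ telescoping steps and the factor $N_j$ from Bernstein. There is no real obstacle here — this is a warm-up observation — but if one wanted the cleanest writeup, the mild subtlety is making sure the intermediate points in the telescoping chain stay in $\T^d$ (they do, by periodicity) and that Bernstein's inequality is applied to the correct one-dimensional restriction (which has degree at most $N_j$ in the $j$-th variable).
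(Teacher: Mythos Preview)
Your proof is correct and follows essentially the same approach as the paper: both use Bernstein's inequality $\|\partial f/\partial x_j\|_\infty\le N_j\|f\|_\infty$, a coordinate-wise telescoping (the paper phrases this via the Mean Value Theorem rather than an integral, but it is the same step) to obtain $|f(\bx)-f(\by)|\le\sum_j N_j|x_j-y_j|\,\|f\|_\infty$, and then apply this at a point of maximum of $|f|$ together with the mesh condition \eqref{2.1} to get the factor $\tfrac12$.
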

\begin{proof} The Bernstein inequality for $f\in\Tr(\bN)$ 
$$
\|f'_{x_j}\|_\infty \le N_j\|f\|_\infty
$$
shows, by the Mean Value Theorem (the Lagrange theorem), that for any $\by\in \T^d$ we have for all coordinate directions $\bbe_j$ and all $h$
$$
|f(\by+h\bbe_j) -f(\by)| \le N_j|h|\|f\|_\infty.
$$
This easily implies that for any $\bx,\by \in \T^d$ 
\be\label{2.2}
|f(\bx)-f(\by)| \le \sum_{j=1}^d N_j|x_j-y_j|\|f\|_\infty.
\ee
Let $\bx^0$ be a point of maximum of $|f(\bx)|$. Then (\ref{2.1}) and (\ref{2.2}) give
$$
|f(\bx^0)-f(\xi^{\nu(\bx^0)})| \le \frac{1}{2}\|f\|_\infty
$$
and, therefore, 
$$
\max_{1\le \nu\le m}|f(\xi^\nu)| \ge |f(\xi^{\nu(\bx^0)})| \ge \frac{1}{2}\|f\|_\infty.
$$
\end{proof}

We now show how known results on the {\it dispersion} of a finite set of points can be 
used in the Universal discretization problem in $L_\infty$. We remind the definition of 
dispersion. Let $d\ge 2$ and $[0,1)^d$ be the $d$-dimensional unit cube. For $\bx,\by \in [0,1)^d$ with $\bx=(x_1,\dots,x_d)$ and $\by=(y_1,\dots,y_d)$ we write $\bx < \by$ if this inequality holds coordinate-wise. For $\bx<\by$ we write $[\bx,\by)$ for the axis-parallel box $[x_1,y_1)\times\cdots\times[x_d,y_d)$ and define
$$
\cB:= \{[\bx,\by): \bx,\by\in [0,1)^d, \bx<\by\}.
$$
For $n\ge 1$ let $T$ be a set of points in $[0,1)^d$ of cardinality $|T|=n$. The volume of the largest empty (from points of $T$) axis-parallel box, which can be inscribed in $[0,1)^d$, is called the dispersion of $T$:
$$
\text{disp}(T) := \sup_{B\in\cB: B\cap T =\emptyset} vol(B).
$$
An interesting extremal problem is to find (estimate) the minimal dispersion of point sets of fixed cardinality:
$$
\text{disp*}(n,d) := \inf_{T\subset [0,1)^d, |T|=n} \text{disp}(T).
$$
It is known that 
\be\label{2.3}
\text{disp*}(n,d) \le C^*(d)/n.
\ee
Inequality (\ref{2.3}) with $C^*(d)=2^{d-1}\prod_{i=1}^{d-1}p_i$, where $p_i$ denotes the $i$th prime number, was proved in \cite{DJ} (see also \cite{RT}). The authors of \cite{DJ} used the Halton-Hammersly set of $n$ points (see \cite{Mat}). Inequality (\ref{2.3}) with $C^*(d)=2^{7d+1}$ was proved in 
\cite{AHR}. The authors of \cite{AHR}, following G. Larcher, used the $(t,r,d)$-nets. 
\begin{Definition}\label{D2.1} A $(t,r,d)$-net (in base $2$) is a set $T$ of $2^r$ points in 
$[0,1)^d$ such that each dyadic box $[(a_1-1)2^{-s_1},a_12^{-s_1})\times\cdots\times[(a_d-1)2^{-s_d},a_d2^{-s_d})$, $1\le a_j\le 2^{s_j}$, $j=1,\dots,d$, of volume $2^{t-r}$ contains exactly $2^t$ points of $T$.
\end{Definition}

\begin{Theorem}\label{T2.1}   There exists a large enough constant $C(d)$, which depends only on $d$, such that for any $n\in \N$ there is a set $\Xi_m:=\{\xi^\nu\}_{\nu=1}^m\subset \T^d$, with $m\le C(d)2^n$ that provides universal discretization in $L_\infty$ for the collection $\cC(n,d)$.
\end{Theorem}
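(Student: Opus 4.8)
The plan is to combine Proposition~\ref{P2.1} with the dispersion bound (\ref{2.3}), obtained via $(t,r,d)$-nets, after a rescaling from the cube $[0,1)^d$ to the torus $\T^d=[0,2\pi)^d$. The key observation is that Proposition~\ref{P2.1} reduces the problem to a purely geometric covering property: we need a point set $\Xi_m\subset\T^d$ such that for \emph{every} $\bx\in\T^d$ there is a point $\xi^{\nu(\bx)}\in\Xi_m$ lying within $\ell^\infty$-distance $(2dN_j)^{-1}$ of $\bx$ in the $j$th coordinate, simultaneously for each $j=1,\dots,d$, and we need this uniformly over all $\bs$ with $\|\bs\|_1=n$, i.e.\ for all $\bN$ of the form $N_j=2^{s_j}-1$ with $\|\bs\|_1=n$. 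The upper bound on $\max_\nu|f(\xi^\nu)|$ by $\|f\|_\infty$ in (\ref{1.2u}) is trivial, so only the lower bound, i.e.\ the conclusion of Proposition~\ref{P2.1}, needs to be arranged.

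First I would reformulate the required covering property in terms of empty boxes: the condition ``for every $\bx$ there is $\xi^\nu$ with $|x_j-\xi_j^\nu|\le (2dN_j)^{-1}$ for all $j$'' fails precisely when there is an axis-parallel box of side lengths $2/(2dN_j)=1/(dN_j)$ in coordinate $j$ (after normalizing $\T^d$ to have side length $1$, side lengths $\asymp 1/(dN_j)$) that contains no point of $\Xi_m$. Such a box has volume of order $d^{-d}\prod_j N_j^{-1}=d^{-d} v(\bN)^{-1}$, and since $\bN$ comes from $\bs$ with $\|\bs\|_1=n$ we have $\prod_j(2^{s_j}-1)\le \prod_j 2^{s_j}=2^n$, so the box has volume $\ge c(d) 2^{-n}$. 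Hence it suffices to take a set $T\subset[0,1)^d$ of cardinality $m\asymp 2^n$ whose dispersion is at most $c(d)2^{-n}$; by (\ref{2.3}), $\mathrm{disp}^*(m,d)\le C^*(d)/m$, so choosing $m:=\lceil C^*(d)c(d)^{-1}2^n\rceil\le C(d)2^n$ makes every empty box have volume $<c(d)2^{-n}$, which rules out the bad configuration for every admissible $\bN$ at once. I would then rescale $T$ by the factor $2\pi$ to land in $\T^d$ and set $\Xi_m$ equal to that rescaled set.

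The one technical point requiring a little care is matching the shape of the empty box. Dispersion controls the volume of empty \emph{axis-parallel} boxes but not their aspect ratio, whereas the neighborhood condition (\ref{2.1}) asks for a box of prescribed side lengths $\asymp 1/(dN_j)$ in each direction. The resolution is that if $\bx$ violated (\ref{2.1}) for \emph{every} $\xi^\nu$, then the specific box $\prod_j [x_j-1/(dN_j),\ x_j+1/(dN_j)]$ (intersected with $\T^d$, using periodicity to wrap around if needed) would be empty, and this box has the right volume lower bound $c(d)2^{-n}$ regardless of aspect ratio; dispersion forbids \emph{any} empty box of that volume, in particular this one. Periodicity at the boundary of $\T^d$ is handled either by working with the torus metric directly, or by the standard trick of noting that a box wrapping around the seam can be split into at most $2^d$ genuine sub-boxes of $[0,1)^d$, at least one of which still has volume $\ge 2^{-d}c(d)2^{-n}$, which only changes the constant.

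With the covering set in hand, the proof concludes immediately: for each $\bs$ with $\|\bs\|_1=n$ the subspace $\Tr(R(\bs))=\Tr(\Pi(\bN))$ with $N_j=2^{s_j}-1$ satisfies the hypothesis of Proposition~\ref{P2.1} with the set $\Xi_m$, hence $\|f\|_\infty\le 2\max_{1\le\nu\le m}|f(\xi^\nu)|$ for all $f\in\Tr(R(\bs))$, giving (\ref{1.2u}) with $C_1(d)=1/2$; the right inequality in (\ref{1.2u}) is trivial. Since $\Xi_m$ was chosen independently of $\bs$, it provides universal discretization in $L_\infty$ for the whole collection $\cC(n,d)$, with $m\le C(d)2^n$. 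The main obstacle, such as it is, is purely bookkeeping: tracking the dimensional constants through the rescaling and the box-shape/periodicity adjustments so that the final cardinality bound is clean; there is no deep new difficulty beyond invoking (\ref{2.3}).
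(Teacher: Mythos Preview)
Your argument is correct and is essentially the paper's approach: both reduce the problem to the geometric covering condition of Proposition~\ref{P2.1} and then invoke $(t,r,d)$-nets. The only difference is packaging. In the proof of Theorem~\ref{T2.1} the paper uses the defining property of a $(t,r,d)$-net directly: after rescaling, every dyadic box $\prod_j[2\pi(a_j-1)2^{-s_j'},2\pi a_j2^{-s_j'})$ of volume $(2\pi)^d2^{t-r}$ contains a point of the net, which immediately yields~(\ref{2.1}) without any boundary/periodicity bookkeeping, since the dyadic boxes tile $\T^d$ exactly. You instead pass through the dispersion bound~(\ref{2.3}) and then argue by contradiction with an arbitrary (non-dyadic) box centered at $\bx$; this is precisely the route the paper formalizes separately as Theorem~\ref{T2.2}. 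Both routes yield the same $m\le C(d)2^n$.

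Two minor points to tidy up. First, you set $N_j=2^{s_j}-1$, which vanishes when $s_j=0$ and makes $(2dN_j)^{-1}$ undefined; the paper sidesteps this by applying Proposition~\ref{P2.1} with $\bN=2^{\bs}$ (noting $\Tr(R(\bs))\subset\Tr(2^{\bs})$). Second, your periodicity fix via splitting into $\le 2^d$ sub-boxes is fine but unnecessary if you follow the paper and locate $\bx$ in a dyadic box of the tiling rather than centering a box at $\bx$.
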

\begin{proof} We will use a $(t,r,d)$-net, which satisfies the conditions of Proposition \ref{P2.1} for all $\bN = 2^\bs$, $\|\bs\|_1=n$. More precisely, it will be a net adjusted to the 
$\T^d$, namely, a net $T$ for $[0,1)^d$ multiplied by $2\pi$.   A construction of such nets for all $d$ and $t\ge Cd$, $r\ge t$  is given in \cite{NX}. For a given $n\in \N$ define $r:= n+t+(1+[\log(4d\pi)])d$ and consider the $(t,r,d)$-net $T(t,r,d)$. Denote $T_\pi(t,r,d):= 2\pi T(t,r,d):=\{2\pi \bx: \bx\in T(t,r,d)\}$. Take an $\bs\in\Z^d_+$ such that $\|\bs\|_1=n$ and consider $\bs'$ with $s_j' := s_j+1+[\log (4d\pi)]$. Any $\bx\in \T^d$ belongs to a dyadic box of the form $[2\pi(a_1-1)2^{-s'_1},2\pi a_12^{-s'_1})\times\cdots\times[2\pi(a_d-1)2^{-s'_d},2\pi a_d2^{-s'_d})$ with proper $a_1,\dots,a_d$. The volume of this box is equal to
$(2\pi)^d2^{t-r}$. Therefore, by the definition of the $T_\pi(t,r,d):=\{\xi^\nu\}_{\nu=1}^{2^r}$ there is 
$\xi^{\nu(\bx)}$ from that box. Note that here we use a weaker property of $T(t,r,d)$ than required by Definition \ref{D2.1} -- we only need one point from the set to be in the box. Then for each $j=1,\dots,d$
$$
|x_j-\xi^{\nu(\bx)}_j| \le 2\pi 2^{-s'_j} \le 2^{-s_j} 2\pi(4d\pi)^{-1} = (2d2^{s_j})^{-1}.
$$
This means that $\{\xi^\nu\}_{\nu=1}^{2^r}$ satisfies the condition of Proposition \ref{P2.1} with $m=2^r \le C(d)2^n$ for all $\bs\in\Z^d_+$ with $\|\bs\|_1=n$. By Proposition \ref{P2.1} the set $\{\xi^\nu\}_{\nu=1}^{2^r}$
provides for each $f\in \Tr(2^\bs)$ the inequality
$$
\|f\|_\infty \le 2\max_{1\le \nu \le m} |f(\xi^\nu)|.
$$
This completes the proof of Theorem \ref{T2.1}.

\end{proof} 

In the above theorem we used a set of points with special structure -- the $(t,r,d)$-net -- 
as the one, which provided the universal discretization for the collection $\cC(n,d)$. 
We now prove a conditional theorem based on the concept of dispersion. 

\begin{Theorem}\label{T2.2} Let a set $T$ with cardinality $|T|= 2^r=:m$ have dispersion 
satisfying the bound disp$(T) < C(d)2^{-r}$ with some constant $C(d)$. Then there exists 
a constant $c(d)\in \N$ such that the set $2\pi T:=\{2\pi\bx: \bx\in T\}$ provides the universal discretization in $L_\infty$ for the collection $\cC(n,d)$ with $n=r-c(d)$.
\end{Theorem}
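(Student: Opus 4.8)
The plan is to deduce Theorem~\ref{T2.2} from Proposition~\ref{P2.1}, by showing that for a suitable $c(d)$ the scaled set $2\pi T$ fulfils the covering hypothesis (\ref{2.1}) simultaneously for all $\bN$ of the form $N_j=2^{s_j}$ with $\|\bs\|_1=n:=r-c(d)$. Since $R(\bs)\subset \Pi(\bN)$ for these $\bN$, it is enough to treat $\Tr(\Pi(\bN))$ for them; note that then every $N_j\ge 1$ (so Proposition~\ref{P2.1} is applicable) and $\prod_{j=1}^d N_j=2^{n}$.

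First I would fix such an $\bN$ together with an arbitrary $\bx\in\Td$, set $\by:=\bx/(2\pi)\in[0,1)^d$ and $w_j:=(4\pi d N_j)^{-1}<1$, and produce an axis-parallel box $B\subset[0,1)^d$ containing $\by$ with side length $w_j$ in the $j$th direction. To avoid wrap-around at the boundary of the cube I would \emph{not} center $B$ at $\by$ but take $B=\prod_{j=1}^d[a_j,a_j+w_j)$ with $a_j:=\min(y_j,\,1-w_j)$; then $0\le a_j$, $a_j+w_j\le 1$, and $\by\in B$ in each coordinate, while $\mathrm{vol}(B)=\prod_j w_j=(4\pi d)^{-d}\prod_j N_j^{-1}=(4\pi d)^{-d}2^{-n}$.

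Next I would fix the constant. With $n=r-c(d)$ one has $\mathrm{vol}(B)=(4\pi d)^{-d}2^{c(d)}2^{-r}$, so choosing $c(d):=\max\{1,\ \lceil\log_2(C(d)(4\pi d)^d)\rceil\}\in\N$ makes $\mathrm{vol}(B)\ge C(d)2^{-r}>\text{disp}(T)$. Consequently $B$ cannot be empty of points of $T$ (an empty box has volume at most $\text{disp}(T)$; the borderline case where the upper corner of $B$ equals $1$ is handled by the usual approximation from inside), so $B$ contains some $\bt\in T$. Putting $\xi:=2\pi\bt\in 2\pi T$, and using that $\by$ and $\bt$ lie in the same box of side $w_j$, we get $|x_j-\xi_j|=2\pi|y_j-t_j|<2\pi w_j=(2dN_j)^{-1}$ for every $j$, which is exactly (\ref{2.1}). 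Proposition~\ref{P2.1} then yields $\|f\|_\infty\le 2\max_{1\le\nu\le m}|f(\xi^\nu)|$ for every $f\in\Tr(\Pi(\bN))\supset\Tr(R(\bs))$; the matching upper bound $\max_\nu|f(\xi^\nu)|\le\|f\|_\infty$ is trivial. Since $\bs$ with $\|\bs\|_1=n$ was arbitrary and $|2\pi T|=|T|=2^r=m$, this shows that $2\pi T$ provides universal discretization in $L_\infty$ for $\cC(n,d)$.

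No step is genuinely hard: the argument is a direct application of the dispersion bound together with Proposition~\ref{P2.1}, and in fact only the one-point covering property of $T$ is used, which is exactly why replacing the net in Theorem~\ref{T2.1} by an arbitrary low-dispersion set still works. The only points that need care are the bookkeeping ones flagged above — keeping $B$ inside $[0,1)^d$ by shifting rather than centering (which is why $w_j<1$ matters), and arranging $c(d)$ to come out as a natural number.
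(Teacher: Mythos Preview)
Your proof is correct and follows essentially the same route as the paper: both reduce the claim to Proposition~\ref{P2.1} by finding, for every $\bx\in\T^d$ and every $\bs$ with $\|\bs\|_1=n$, an axis-parallel box of volume exceeding $C(d)2^{-r}$ around $\bx/(2\pi)$, and then invoking the dispersion bound to locate a point of $T$ inside it. The only cosmetic difference is that the paper uses dyadic boxes (defining $s_j':=s_j+1+[\log(4d\pi)]$ and taking the dyadic cell containing $\bx$), which automatically sit inside $[0,1)^d$, whereas you take boxes of side $(4\pi d N_j)^{-1}$ and shift them to avoid the boundary; the resulting choices of $c(d)$ differ only by harmless constants.
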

\begin{proof} The proof is based on Proposition \ref{P2.1}. The proof goes along the lines of the above proof of Theorem \ref{T2.1}. We use notations from the proof of Theorem \ref{T2.1}. Set
$$
c(d) := 1+[\log C(d)] + d(1+ [\log 4d\pi]).
$$
Take an $\bs\in\Z^d_+$ such that $\|\bs\|_1=n=r-c(d)$ (assuming $r>c(d)$) and consider $\bs'$ as in the proof of Theorem \ref{T2.1}.   Any $\bx\in \T^d$ belongs to a dyadic box of the form $[2\pi(a_1-1)2^{-s'_1},2\pi a_12^{-s'_1})\times\cdots\times[2\pi(a_d-1)2^{-s'_d},2\pi a_d2^{-s'_d})$ with some $a_1,\dots,a_d$. The volume of this box is equal to
$(2\pi)^d2^{-\|\bs'\|_1} \ge (2\pi)^dC(d)2^{-r}$. Therefore, by the definition of dispersion this box has a point from $2\pi T$. Denote $2\pi T :=\{\xi^\nu\}_{\nu=1}^{2^r}$ and denote a point from the the above box by $\xi^{\nu(\bx)}$. Then for each $j=1,\dots,d$
$$
|x_j-\xi^{\nu(\bx)}_j| \le 2\pi 2^{-s'_j} \le 2^{-s_j} 2\pi(4d\pi)^{-1} = (2d2^{s_j})^{-1}.
$$
We complete the proof in the same way as in the proof of Theorem \ref{T2.1}.
 \end{proof}
 
 It turns out that two properties of a set $T$: (I) dispersion of $T$ is of order $2^{-n}$ and 
 (II) the set $2\pi T$ provides universal discretization in $L_\infty$ for the collection 
 $\cC(n,d)$ are closely connected. Theorem \ref{T2.2} shows that, loosely speaking, 
 property (I) implies property (II). We now prove the inverse implication. 
 
 \begin{Theorem}\label{T2.2'}  Assume that $T\subset [0,1)^d$ is such that the set $2\pi T$ provides universal discretization in $L_\infty$ for the collection 
 $\cC(n,d)$ with a constant $C_1(d)$ (see (\ref{1.2u})). Then there exists a positive constant $C(d)$ with the following property disp$(T) \le C(d)2^{-n}$. 
 \end{Theorem}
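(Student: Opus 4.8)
The plan is to argue by contrapositive: assuming the dispersion of $T$ is large, I will exhibit a box avoided by $T$ whose ``characteristic-like'' trigonometric polynomial in some $\Tr(R(\bs))$, $\|\bs\|_1=n$, has large sup-norm but is small at every point of $2\pi T$, contradicting the left inequality in (\ref{1.2u}). Concretely, suppose $\mathrm{disp}(T) = V$. Then there is an axis-parallel box $B = [\bx,\by) \subset [0,1)^d$ with $B \cap T = \emptyset$ and $\mathrm{vol}(B) = \prod_{j=1}^d (y_j - x_j) \ge V/2$ (or exactly $V$ up to taking a slightly smaller box inside a near-extremal one). Writing $\ell_j := y_j - x_j$ for the side lengths, I choose for each $j$ an integer $s_j$ so that $2^{-s_j}$ is comparable to $\ell_j$ — say $2^{-s_j-1} < \ell_j \le 2^{-s_j}$, with the convention $s_j \ge 0$ (if some $\ell_j$ is large, i.e.\ close to $1$, set $s_j = 0$). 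The key numerical point is that $2^{-\|\bs\|_1} = \prod 2^{-s_j}$ is then within a factor $2^d$ of $\mathrm{vol}(B) \ge V/2$, so $\|\bs\|_1 = \log_2(1/\mathrm{vol}(B)) + O(d)$; after adjusting by an additive constant $c(d)$ absorbed into $C(d)$, I may assume $\|\bs\|_1 = n$ exactly (if the raw value exceeds $n$, shrink the box further in one coordinate; if it is smaller, this only makes $V$ smaller than the target bound and there is nothing to prove — so WLOG the interesting regime gives $\|\bs\|_1 \ge n$, and I restrict $B$ so that $\|\bs\|_1 = n$, which only decreases $\mathrm{vol}(B)$, hence decreases $V$ — handled carefully, the bookkeeping yields $V \le C(d)2^{-n}$).

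The heart of the matter is constructing the bad polynomial. Center the box: let $\bc = 2\pi(\bx+\by)/2$ be the center of $2\pi B$. I want $f \in \Tr(R(\bs))$ with $|f(\bc)| = \|f\|_\infty$ of order $1$, but $|f(\xi)|$ small for every $\xi \in 2\pi T$, using that no point of $2\pi T$ lies in $2\pi B$, i.e.\ every $\xi \in 2\pi T$ differs from $\bc$ by at least $\pi \ell_j$ in some coordinate $j$. The natural candidate is a product $f(\bu) = \prod_{j=1}^d K_{s_j}(u_j - c_j)$ of one-dimensional kernels, where each $K_{s_j}$ is a trigonometric polynomial of degree $< 2^{s_j}$ (so that $f \in \Tr(R(\bs))$), normalized to $K_{s_j}(0) = 1 = \|K_{s_j}\|_\infty$, and concentrated near $0$: specifically I want $|K_{s_j}(v)| \le 1/2$, or better $\le (2d)^{-1}$ or exponentially small, once $|v| \ge \pi 2^{-s_j} \ge \pi \ell_j / 2$ — wait, I need decay once $|v| \gtrsim \ell_j$, and since $\ell_j \ge 2^{-s_j-1}$, the scale $2^{-s_j}$ is the right one. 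A Fejér-type kernel $\big(\tfrac{\sin(2^{s_j-1} v/2)}{2^{s_j-1}\sin(v/2)}\big)^2$ (of degree $2^{s_j-1}-1$) is too slowly decaying; instead I would use a higher-power or de la Vallée-Poussin/Jackson-type kernel, or simply a product of a few Fejér kernels, raised to a power depending only on $d$, to get $|K_{s_j}(v)| \le (3d)^{-1}$ uniformly for $2\pi 2^{-s_j} \le |v| \le \pi$ while keeping the degree below $2^{s_j}$ (possibly requiring $s_j$ to satisfy $2^{s_j} \ge $ some absolute constant, which is again an additive shift in $n$ absorbed into $c(d)$). Then for any $\xi \in 2\pi T$, picking the coordinate $j$ in which $|\xi_j - c_j| \ge \pi \ell_j \ge \pi 2^{-s_j-1} \ge$ the decay threshold (possibly after padding the threshold, i.e.\ choosing $s_j$ one smaller), we get $|K_{s_j}(\xi_j - c_j)| \le (3d)^{-1}$, and since all other factors are $\le 1$, $|f(\xi)| \le (3d)^{-1} < C_1(d)$ once $C_1(d) \le 1$ (which we may assume, or else rescale). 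Meanwhile $\|f\|_\infty \ge |f(\bc)| = 1$. This contradicts (\ref{1.2u}), which demands $\max_\nu |f(\xi^\nu)| \ge C_1(d)\|f\|_\infty = C_1(d)$.

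I expect the main obstacle to be purely technical: building a one-dimensional trigonometric kernel of degree strictly below $2^{s}$ that is $1$ at the origin, bounded by $1$ everywhere, and uniformly as small as $(3d)^{-1}$ (a $d$-dependent, but $s$-independent, threshold) on the complement of an interval of length $\sim 2^{-s}$ around the origin. The Fejér kernel gives only $O((2^{s}|v|)^{-2})$ decay, which near the threshold $|v| \sim 2^{-s}$ is merely $O(1)$, not small; the fix is to raise it to a power $p = p(d)$ (compensating the normalization loss, since $\|K_{s}^p\|$ at the origin stays $1$ if we renormalize, and the degree multiplies by $p$, so one must take $s$ large enough that $p\cdot 2^{s-1} < 2^s$ fails — hence instead use a kernel of degree $2^{s}/p$ raised to the $p$-th power, again shifting $n$ by $\log_2 p = O_d(1)$). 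Alternatively, a cleaner route avoiding kernels: use (\ref{2.3}) in reverse via Theorem \ref{T2.2} is not available here since that is the wrong direction; but one can instead invoke the elementary fact (essentially the converse half of Proposition \ref{P2.1}'s circle of ideas) that if a set is $(2d2^{s_j})^{-1}$-dense in every coordinate it controls $\|f\|_\infty$ for $f \in \Tr(2^{\bs})$, and conversely a large empty box means the set fails to be dense, which one converts into a polynomial via a bump as above. Either way, once the kernel estimate is in hand, assembling the contradiction and tracking the additive constant $c(d)$ that relates $\|\bs\|_1$ to $\log_2(1/\mathrm{disp}(T))$ is routine, and yields $\mathrm{disp}(T) \le C(d)2^{-n}$.
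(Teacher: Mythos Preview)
Your approach is correct and is essentially the same as the paper's: argue by contradiction, locate a large empty box, place a product Fej\'er-type kernel at its center, and observe that all points of $2\pi T$ lie where the kernel is small, contradicting (\ref{1.2u}).

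The one place where you make life harder than necessary is the kernel construction. You match $s_j$ to the side length $\ell_j$ via $2^{-s_j-1} < \ell_j \le 2^{-s_j}$, then correctly note that the plain Fej\'er kernel gives only $O(1)$ decay at the box boundary, and propose to fix this by raising the kernel to a $d$-dependent power (paying a $\log_2 p(d)$ shift in $n$). The paper's shortcut is to build this shift into the choice of $\bs$ from the start: given $\mathrm{disp}(T) > C(d)2^{-n}$ with $C(d)$ large enough, one finds $\bs$ with $\|\bs\|_1 = n$ and an empty box whose $j$th side is at least $2^{a(d)-s_j}$, not merely $\sim 2^{-s_j}$. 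Then for any point outside the box, some coordinate is at distance $\gtrsim 2^{a(d)-s_j}$ from the center, and the ordinary Fej\'er bound $\K_{N}(v) \le \pi^2/(Nv^2)$ already gives a factor $\lesssim 2^{-2a(d)}$ in that coordinate---small enough to beat $C_1(d)$ once $a(d)$ is chosen suitably. No powered or Jackson kernels are needed; the ``padding'' you mention in passing (``choosing $s_j$ one smaller'') is exactly the right idea, just carried further (shift by $a(d)$, not by $1$) and committed to at the outset.

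Your bookkeeping paragraph on forcing $\|\bs\|_1 = n$ is slightly garbled (the directions of the inequalities get tangled), but the substance is fine: a large empty box forces the naive $\|\bs\|_1$ to be $\le n$ (up to $O(d)$), and one can then increase some $s_j$'s to hit $n$ exactly, which only sharpens the kernel and preserves the argument.
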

 \begin{proof} We prove this theorem by contradiction. Suppose that disp$(T) > C(d)2^{-n}$. We will show that for large enough $C(d)$ this contradicts (\ref{1.2u})
 for the collection $\cC(n,d)$. Inequality disp$(T) > C(d)2^{-n}$ with large enough $C(d)$ implies that there exists $\bs\in \Z^d_+$, $\|\bs\|_1 =n$, and an empty box $[\bu,\bv)\subset [1,0)^d$ such that $|v_j-u_j|\ge 2^{a(d)-s_j}$, $j=1,\dots,d$. Note that the bigger the $C(d)$ the bigger $a(d)$ can be chosen. We will specify it later. Denote $\bw:=(\bu+\bv)/2$ the center of the box $[\bu,\bv)$ and consider (see Section \ref{Lq} for the definition and properties of $\K_\bN$)
 $$
 f(\bx) := \K_{2^\bs}(\bx-\bw) \in \Tr(R(\bs)).
 $$
 Then by (\ref{FKm}) $f(\bw)= 2^{\|\bs\|_1} =2^n$ and by (\ref{a1.10}) outside the box $2\pi [\bu,\bv)$ we have 
 $|f(\bx)| \le 2^{-2a(d)d}f(\bw)$. Thus, making $a(d)$ large enough to satisfy $2^{-2a(d)d}<C_1(d)$ we obtain a contradiction. 
 
 \end{proof}
 
 We now discuss a straight forward  way of constructing point sets for universal discretization. The following statement is obvious.
 \begin{Proposition}\label{P2.2} Suppose that for each subspace $X^i$, $i=1,\dots,k$, there 
 exists a set $\{\xi^\nu(i)\}_{\nu=1}^{m_i}$ such that for any $f\in X^i$ we have
 $$
 \|f\|_\infty \le C\max_{1\le\nu \le m_i} |f(\xi^\nu(i))|
 $$
 with a constant $C$ independent of $i$. Then for any $f\in \cup_{i=1}^k X^i$ we have
 $$
 \|f\|_\infty \le C\max_{1\le i\le k}\max_{1\le\nu \le m_i} |f(\xi^\nu(i))|.
 $$
\end{Proposition}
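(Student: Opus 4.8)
The statement of Proposition \ref{P2.2} is essentially tautological, so the ``proof'' amounts to unwinding the definitions and chaining two trivial inequalities; I will describe the two-line argument and note where the only (very mild) subtlety sits.

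First I would fix an arbitrary $f\in \cup_{i=1}^k X^i$. By definition of the union there is at least one index $i_0\in\{1,\dots,k\}$ with $f\in X^{i_0}$. Applying the hypothesis to the subspace $X^{i_0}$ and this particular $f$ gives
$$
\|f\|_\infty \le C\max_{1\le \nu\le m_{i_0}} |f(\xi^\nu(i_0))|.
$$
Next I would simply enlarge the right-hand side: the maximum over $\nu$ for the single index $i_0$ is bounded above by the double maximum over all $i$ and all $\nu$, since the latter ranges over a superset of evaluation points (it includes all the points $\xi^\nu(i_0)$, $\nu=1,\dots,m_{i_0}$). Hence
$$
\|f\|_\infty \le C\max_{1\le\nu\le m_{i_0}} |f(\xi^\nu(i_0))| \le C\max_{1\le i\le k}\max_{1\le\nu\le m_i} |f(\xi^\nu(i))|,
$$
which is exactly the claimed inequality. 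Since $f$ was an arbitrary element of $\cup_{i=1}^k X^i$, the proof is complete.

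The only thing to be slightly careful about is that the constant $C$ in the hypothesis is assumed to be independent of $i$; this is what allows the single constant $C$ to survive into the conclusion. If the constants $C_i$ depended on $i$, one would have to replace $C$ by $\max_{1\le i\le k} C_i$, which is why the statement explicitly builds in the uniformity. There is no real obstacle here: this Proposition is recorded only as a convenient packaging device, presumably to be combined with Theorem \ref{T2.1} (or Theorem \ref{T2.2}) to assemble universal sets for a finite union of collections, and the point of stating it separately is organizational rather than mathematical.
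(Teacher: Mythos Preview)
Your proof is correct and matches the paper's treatment: the paper simply declares the statement obvious and gives no proof, and your two-line argument is exactly the intended unwinding.
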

Proposition \ref{P2.2} shows that the set $\cup_{i=1}^k\{\xi^\nu(i)\}_{\nu=1}^{m_i}$ is a universal set for discretization of the collection $X^1,\dots,X^k$ in the uniform norm. 
Note that the cardinality of this set is $\le m_1+\cdots +m_k$. Let us discuss an application of this idea to the collection $\cC(n,d)$. Take $\bs\in\Z^d_+$ such that $\|\bs\|_1=n$ and choose 
  the set $\{\bx(\bn), \bn \in P'(2^\bs)\}$ (see Introduction), which is a discretization set for $\Tr(2^\bs)$ in $L_\infty$ (see (\ref{1.4'})). The union 
  $$
  SG(n,d) := \cup_{\bs:\|\bs\|_1=n} \{\bx(\bn), \bn \in P'(2^\bs)\}
  $$
  is known as the {\it sparse grids}. 
  
  Proposition \ref{P2.2} shows that the sparse grids $SG(n,d)$ provide  universal discretization for the collection $\cC(n,d)$. It is well known and easy to check that the cardinality of $SG(n,d)$ is of order $2^n n^{d-1}$. Comparing this bound to the bound in Theorem \ref{T2.1} we see that the sparse grids do not provide an optimal (in the sense of order) set for universal discretization. 
  
 Let us explore one more idea of constructing point sets for universal discretization of the collection $\cC(n,d)$. It is clear from the definition of the $R(\bs) = \Pi(\bN)$, with $N_j=2^{s_j}-1$, that 
 $$
 \cup_{\bs,\|\bs\|_1=n}\Tr(R(\bs)) \subset \Tr(Q_n),\qquad Q_n := \cup_{\bs,\|\bs\|_1=n}R(\bs).
 $$
 Certainly, a set $\{\xi^\nu\}_{\nu=1}^m$ which provides discretization for the subspace of 
 the hyperbolic cross polynomials $\Tr(Q_n)$ also provides universal discretization for the collection $\cC(n,d)$. This way of construction of universal methods of approximation and 
 universal cubature formulas works very well (see \cite{TBook}). 
 However, surprisingly, it does not work for building a set, which provides universal discretization for the collection $\cC(n,d)$ in $L_\infty$. This follows from a very nontrivial lower bound for the number of points in a set that provides the Marcinkiewicz discretization theorem for $\Tr(Q_n)$ in $L_\infty$ (see \cite{KT3}, \cite{KT4}, and \cite{KaTe03}). The authors proved that the necessary condition for
$\Tr(Q_n)\in\cM(m,\infty)$ is $m\gg |Q_n|^{1+c}$ with absolute constant $c>0$.
This lower bound and the upper bound from Theorem \ref{T2.1} show that on this way we loose not only an extra $(\log m)^{d-1}$ factor (as in the case of sparse grids) but we loose even an extra factor of the power type $m^c$. 

\section{Universal discretization in $L_q$, $1\le q<\infty$}
\label{Lq}

The main goal of this section is to extend Theorem \ref{T2.1} to the case $1\le q<\infty$, i.e. to prove the following theorem. 

\begin{Theorem}\label{T3.1}   For every $1\le q<\infty$ there exists a large enough constant $C(d,q)$, which depends only on $d$ and $q$, such that for any $n\in \N$ there is a set $\Xi_m:=\{\xi^\nu\}_{\nu=1}^m\subset \T^d$, with $m\le C(d,q)2^n$ that provides universal discretization in $L_q$   for the collection $\cC(n,d)$.
\end{Theorem}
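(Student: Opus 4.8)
The plan is to use, for a given $n$, the \emph{same} set $\Xi_m$ as in the proof of Theorem~\ref{T2.1} --- the $2\pi$-dilate of the $(t,r,d)$-net $T(t,r,d)$ of \cite{NX} --- only with the integer $\ell_0$ governing $r=n+t+d\ell_0$ enlarged to a value depending on $d$ and $q$; as there, $m=2^r\le C(d,q)2^n$. Since $\Xi_m$ does not depend on $\bs$, it suffices to establish the two-sided bound (\ref{1.1u}) for a single subspace $\Tr(R(\bs))$ with $\|\bs\|_1=n$, with constants depending only on $d$ and $q$; universality is then free. I would work throughout with the partition of $\T^d$ into the dyadic cells of scale $\bs':=\bs+\ell_0\mathbf 1$; since $\|\bs'\|_1=r-t$, the defining property of the net forces \emph{each} such cell to contain exactly $2^t$ of the points $\xi^\nu$. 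It is this exact equidistribution --- not merely the covering property used in Section~\ref{Inf} --- on which the $L_q$ argument rests.

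For the \textbf{upper bound} I would write $f=f*\K$ with $\K$ a de la Vall\'ee Poussin kernel reproducing $\Tr(R(\bs))$ as in Section~\ref{Lq}, majorized by $\Psi\ge 0$ with $\|\Psi\|_1\le C(d)$ and $|\partial_j\K|\le C(d)2^{s_j}\Psi$ (cf.\ (\ref{a1.10})). Jensen's inequality applied to the probability measures $|\K(\xi^\nu-\cdot)|\,d\mu/\|\K(\xi^\nu-\cdot)\|_1$ gives
\[
\frac1m\sum_{\nu=1}^m|f(\xi^\nu)|^q\;\le\;C(d)^{q-1}\int_{\T^d}|f(\mathbf t)|^q\Big(\frac1m\sum_{\nu=1}^m|\K(\xi^\nu-\mathbf t)|\Big)d\mu(\mathbf t),
\]
so that everything reduces to the pointwise estimate $\frac1m\sum_\nu|\K(\xi^\nu-\mathbf t)|\le C(d)$, uniformly in $\mathbf t$. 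I would prove this by grouping the $\xi^\nu$ according to the scale-$\bs'$ cells: each cell carries exactly $2^t$ points; on each cell $\Psi(\cdot-\mathbf t)$ varies by at most a factor $2$ once $\ell_0$ is large enough (a routine oscillation/reverse-H\"older estimate on cells finer than the natural scale of the kernel); hence the per-cell sum is $\le 2\cdot 2^t$ times the cell average of $\Psi(\cdot-\mathbf t)$, and summing the cell averages recovers $\|\Psi\|_1\le C(d)$.

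The \textbf{lower bound} is the crux, and I would obtain it by duality. Given $f\in\Tr(R(\bs))$, convolving the dual function $|f|^{q-2}f/\|f\|_q^{q-1}$ (resp.\ $\sign f$ when $q=1$) with a de la Vall\'ee Poussin kernel for $\Tr(R(\bs))$ produces $h\in\Tr(R(\bs+\mathbf c))$ with $\|h\|_{q'}\le C(d)$ and $\int_{\T^d}f\bar h\,d\mu=\|f\|_q$; the de la Vall\'ee Poussin convolution is used precisely because it is bounded on every $L_p$, $1\le p\le\infty$, while still reproducing $\Tr(R(\bs))\ni f$ --- this is also what disposes of $q=1$, where no bounded $L_\infty$ projection is available. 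Now $f\bar h\in\Tr(R(\bs+\mathbf c_1))$ for a constant $\mathbf c_1$, hence is reproduced by such a kernel $\K$, and, because the net integrates scale-$\bs'$ step functions exactly,
\[
\Big|\;\|f\|_q-\frac1m\sum_{\nu=1}^m f(\xi^\nu)\overline{h(\xi^\nu)}\;\Big|=\Big|\int_{\T^d}(f\bar h)(\mathbf t)\Big(1-\frac1m\sum_{\nu=1}^m\K(\xi^\nu-\mathbf t)\Big)d\mu(\mathbf t)\Big|\le\Big\|1-\frac1m\sum_{\nu=1}^m\K(\xi^\nu-\cdot)\Big\|_\infty\|f\|_q\|h\|_{q'}.
\]
The heart of the argument is the estimate $\big\|1-\frac1m\sum_\nu\K(\xi^\nu-\cdot)\big\|_\infty\le C(d)2^{-\ell_0}$. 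Replacing $\K$ by its scale-$\bs'$ cell averages introduces no error; the oscillation of $\K$ on a scale-$\bs'$ cell must then be controlled \emph{locally}, by $C(d)2^{-\ell_0}$ times the value of $\Psi$ near the cell, after which the bound $\frac1m\sum_\nu\Psi(\xi^\nu-\mathbf t)\le C(d)$ established above finishes it. Taking $\ell_0=\ell_0(d,q)$ large enough that the right-hand side above is $\le\tfrac12\|f\|_q$, we get $\|f\|_q\le 2\big|\frac1m\sum_\nu f(\xi^\nu)\overline{h(\xi^\nu)}\big|$; a discrete H\"older inequality together with the already-proven upper bound applied to $h$ at the exponent $q'$ then gives $\|f\|_q^q\le C(d,q)\,\frac1m\sum_\nu|f(\xi^\nu)|^q$, which is the lower bound in (\ref{1.1u}).

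I expect the main obstacle to be exactly this quadrature-error estimate: the net is only a bounded number of dyadic levels finer than the spectrum of $f\bar h$, so naive oscillation bounds --- which would demand a net roughly $n$ levels finer, i.e.\ of cardinality $2^{\Theta(dn)}$ --- are useless, and one must combine the exact equidistribution of the $(t,r,d)$-net with the sharp localization of the kernel. The remaining ingredients are routine: the reverse-H\"older bounds for $\Psi$ on fine cells, the check that the (slightly enlarged) net still serves the auxiliary subspaces $\Tr(R(\bs+\mathbf c))$ at the conjugate exponent, and the minor bookkeeping for $q=1$, where $q'=\infty$ and one simply inserts $\|h\|_\infty\le C(d)$ in the discrete H\"older step.
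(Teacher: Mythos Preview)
Your proof is correct, and the upper bound is essentially the paper's argument: both reduce to the pointwise kernel-sum estimate $\frac1m\sum_\nu|\V_{\bN}(\bx-\xi^\nu)|\le C(d)$ (Lemmas~\ref{L3.1}/\ref{L3.3}); the paper packages it via Riesz--Thorin interpolation rather than Jensen, but the content is the same.

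Your lower bound, however, takes a genuinely different route. The paper does \emph{not} estimate a quadrature error $\|1-\frac1m\sum_\nu\K(\xi^\nu-\cdot)\|_\infty$ or build a dual function $h$. Instead it proves a stability lemma (Lemma~\ref{L3.4}): if $\Xi_m$ and $\Gamma_m$ are two point sets with each $\gamma^\nu$ in the same scale-$\bs'$ cell as $\xi^\nu$ and differing only in the $j$th coordinate, then for $f\in\Tr(\bK)$ one has $\frac1m\sum_\nu\bigl||f(\xi^\nu)|^q-|f(\gamma^\nu)|^q\bigr|\le C(d,q)(K_j/N_j)\|f\|_q^q$; this follows from the mean-value inequality $||a|^q-|b|^q|\le q\max(|a|,|b|)^{q-1}|a-b|$, H\"older, the already-proven upper bound applied to $f$ and $f'_{x_j}$, and Bernstein. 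Taking $\Gamma_m$ to be the regular grid $\{\bx(\bn):\bn\in P'(\bN)\}$ with each point repeated $2^t$ times, the classical Marcinkiewicz theorem (\ref{1.4'}) supplies the lower bound for $\Gamma_m$, and $d$ successive applications of the stability lemma (one per coordinate) transport it to $\Xi_m$ once $K_j/N_j=2^{2-a(d,q)}$ is small.

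Your duality argument is more self-contained, since it never invokes (\ref{1.4'}) for the regular grid and shows directly that the net is a good quadrature rule; this could in principle extend to other equidistributed point sets. The paper's comparison argument is shorter and sidesteps the kernel-oscillation analysis you flag as the main obstacle, trading it for the Bernstein-based perturbation estimate in Lemma~\ref{L3.4}, which plays the analogous role. Both arguments rely essentially on the \emph{exact} $2^t$-per-cell property of the $(t,r,d)$-net, not just the covering property used in Section~\ref{Inf}.
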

 \begin{proof} We prove this theorem in two steps. First, under some assumptions on the set $\Xi_m$ we prove one-sided discretization inequalities, which bound the discrete norm by the continuous norm. 
 Second, under stronger assumptions on the set $\Xi_m$ we prove the inverse inequalities. 
 
 \subsection{Upper bounds for the discrete norm} \label{Lq1}
 We need some classical trigonometric polynomials. We begin with the univariate case. 
 The Dirichlet kernel of order $n$:
$$
\mathcal D_n (x):= \sum_{|k|\le n}e^{ikx} = e^{-inx} (e^{i(2n+1)x} - 1)
(e^{ix} - 1)^{-1} 
$$
\be\label{a1.1}
=\bigl(\sin (n + 1/2)x\bigr)\bigm/\sin (x/2)
\ee
   is an even trigonometric polynomial.  The Fej\'er kernel of order $n - 1$:
$$
\mathcal K_{n} (x) := n^{-1}\sum_{k=0}^{n-1} \mathcal D_k (x) =
\sum_{|k|\le n} \bigl(1 - |k|/n\bigr) e^{ikx} 
$$
$$
=\bigl(\sin (nx/2)\bigr)^2\bigm /\bigl(n (\sin (x/2)\bigr)^2\bigr).
$$
The Fej\'er kernel is an even nonnegative trigonometric
polynomial in $\Tr(n-1)$ with the majorant
\be\label{a1.10}
\bigl| \mathcal K_{n} (x) \bigr|=\mathcal K_{n} (x)\le \min \bigl(n, \pi^2 /(nx^2)\bigr),
\qquad |x|\le \pi.
\ee
It satisfies the obvious relations
\be\label{FKm}
\| \mathcal K_{n} \|_1 = 1, \qquad \| \mathcal K_{n} \|_{\infty} = n
\ee
The de la Vall\'ee Poussin kernel:
$$
\mathcal V_{n} (x) := n^{-1}\sum_{l=n}^{2n-1} \mathcal D_l (x)= 2\K_{2n}-\K_n, 
$$
is an even trigonometric
polynomials of order $2n - 1$ with the majorant
\be\label{a1.12}
\bigl| \mathcal V_{n} (x) \bigr| \le C \min \bigl(n,  \
 (nx^2)^{-1}\bigr), \quad |x|\le \pi.
\ee
The above relation (\ref{a1.12}) easily implies the following lemma.
\begin{Lemma}\label{L3.1} For a set $\Xi_m:=\{\xi^\nu\}_{\nu=1}^m\subset \T$ satisfying the condition $|\Xi_m\cap [x(l-1),x(l))|\le b$, $x(l) := \pi l/2n$, $l = 1, \dots, 4n$, we have
$$
\sum_{\nu=1}^m\bigl| \mathcal V_n (x - \xi^\nu) \bigr|\le Cbn.
$$
\end{Lemma}
We use the above Lemma \ref{L3.1} to prove a one-sided inequality.

\begin{Lemma}\label{L3.2} For a set $\Xi_m:=\{\xi^\nu\}_{\nu=1}^m\subset \T$ satisfying the condition $|\Xi_m\cap [x(l-1),x(l))|\le b$, $x(l) := \pi l/2n$, $l = 1, \dots, 4n$, we have for $1\le q\le\infty$
$$
\left\|m^{-1}\sum_{\nu=1}^m a_\nu \mathcal V_n (x - \xi^\nu)\right\|_q \le C(bn/m)^{1-1/q} \left(\frac{1}{m}\sum_{\nu=1}^m |a_\nu|^q\right)^{1/q}.
$$
\end{Lemma}
\begin{proof} Denote by $\ell_q(m)$ the $\mathbb C^m$ equipped with the norm
$$
\|\ba\|_q :=\left(\frac{1}{m}\sum_{\nu=1}^m |a_\nu|^q\right)^{1/q},\quad \ba=(a_1,\dots,a_d).
$$
Let $V_n$ be the operator on $\ell_q(m)$ defined as follows:
$$
V(\ba) :=\frac{1}{m}\sum_{\nu=1}^{m} a_\nu \mathcal V_n (x - \xi^\nu).
$$
It is known that $\|\V_n\|_1 \le 3$. This implies
\be\label{a2.22}
\| V_n \|_{\ell_1(m)\to L_1}\le 3.
\ee
Using Lemma \ref{L3.1}
we obtain
\be\label{a2.23}
\| V_n \|_{\ell_{\infty}(m)\to L_{\infty}}\le C bn/m.
\ee
From relations (\ref{a2.22}) and (\ref{a2.23}),  using the Riesz-Torin interpolation 
theorem we find
$$
\| V \|_{\ell_q(m)\to L_q}\le C (bn/m)^{1-1/q},
$$
which implies the conclusion of the lemma.
\end{proof}

We now proceed to the multivariate case. Denote the multivariate Fej\'er and de la Vall\'ee Poussin kernels:
$$
\K_\bN(\bx):=\prod_{j=1}^d \K_{N_j}(x_j), \qquad \mathcal V_{\mathbf N} (\bx) := \prod_{j=1}^d \mathcal V_{N_j}  (x_j)  ,\qquad
\mathbf N = (N_1 ,\dots,N_d) .
$$
In the same way as above in the univariate case one can establish the following multivariate analog of Lemma \ref{L3.2}.

\begin{Lemma}\label{L3.3} For a set $\Xi_m:=\{\xi^\nu\}_{\nu=1}^m\subset \T^d$ satisfying the condition $|\Xi_m\cap [\bx(\bn),\bx(\bn+{\mathbf 1}))|\le b$, $\bn\in  P'(\bN)$, ${\mathbf 1}$ is a vector with coordinates $1$ for all $j$,  we have for $1\le q\le\infty$
$$
\left\|\frac{1}{m}\sum_{\nu=1}^m a_\nu \mathcal V_\bN (\bx - \xi^\nu)\right\|_q \le C(d)(bv(\bN)/m)^{1-1/q} \left(\frac{1}{m}\sum_{\nu=1}^m |a_\nu|^q\right)^{1/q}.
$$
\end{Lemma}

\begin{Theorem}\label{T3.2} Let a set $\Xi_m:=\{\xi^\nu\}_{\nu=1}^m\subset \T^d$ satisfy the condition $|\Xi_m\cap [\bx(\bn),\bx(\bn+{\mathbf 1}))|\le b(d)$, $\bn\in  P'(\bN)$, ${\mathbf 1}$ is a vector with coordinates $1$ for all $j$. Then for $m\ge \vartheta(\bN)$ we have for each $f\in \Tr(\bN)$ and $1\le q\le \infty$
$$
\left(\frac{1}{m}\sum_{\nu=1}^m |f(\xi^\nu)|^q\right)^{1/q} \le C(d)\|f\|_q.  
$$
\end{Theorem}
\begin{proof} For a set $\Xi_m:=\{\xi^\nu\}_{\nu=1}^m\subset \T^d$ denote
$$
M(f):=M(f,\Xi_m) := (f(\xi^1),\dots,f(\xi^m)).
$$
  We have for $f\in \Tr(\bN)$
\begin{align*}
\bigl\| M(f) \bigr\|_q^q &=\frac{1}{m}\sum_{\nu=1}^{m} \bigl| f(\xi^\nu) \bigr|^q =
\frac{1}{m}\sum_{\nu=1}^{m}f(\xi^\nu)\varepsilon_\nu \bigl| f(\xi^\nu) \bigr|^{q-1} =\\
&=(2\pi)^{-d}\int_0^{2\pi}f(\bx)\frac{1}{m}\sum_{\nu=1}^{m}\varepsilon_\nu
\bigl|f(\xi^\nu)\bigr|^{q-1}\mathcal V_\bN(\bx-\xi^\nu)d\bx\le\\
&\le \|f \|_q\left \|\frac{1}{m}\sum_{\nu=1}^{m}\varepsilon_\nu \bigl| f(\xi^\nu)
\bigr|^{q-1}\mathcal V_\bN (\bx - \xi^\nu) \right\|_{q'}.
\end{align*}
Using Lemma \ref{L3.3} we see that the last term is
$$
\le C(d) \| f \|_q \bigl\| M(f) \bigr\|_q^{q-1},
$$
which implies the required bound.

The theorem is proved.
\end{proof}
\subsection{Lower bounds for the discrete norm} \label{Lq2}
We now proceed to the inverse bounds for the discrete norm. Denote
$$
\Delta(\bn):= [\bx(\bn),\bx(\bn+{\mathbf 1})),\quad \bn \in P'(\bN).
$$
Suppose that a sequence $\Xi_m:=\{\xi^\nu\}_{\nu=1}^m\subset \T^d$ has the following 
property. 

{\bf Property} E($b$). There is a number $b\in \N$ such that for any $\bn \in P'(\bN)$ we have
$$
|\Delta(\bn)\cap \Xi_m| =b.
$$
Clearly, in this case $m=v(\bN)b$, where $v(\bN) = |P'(\bN)|$. 

\begin{Lemma}\label{L3.4} Suppose that two sequences $\Xi_m:=\{\xi^\nu\}_{\nu=1}^m\subset \T^d$ and $\Gamma_m:=\{\gamma^\nu\}_{\nu=1}^m\subset \T^d$ satisfy the following condition. For a given $j\in \{1,\dots,d\}$, $\gamma^\nu$ may only differ from 
$\xi^\nu$ in the $j$th coordinate. Moreover, assume that if $\xi^\nu\in \Delta(\bn)$ then 
also $\gamma^\nu\in \Delta(\bn)$. Finally, assume that $\Xi_m$ has property E($b$) with $b\le C'(d)$. 
Then for $f\in \Tr(\bK)$ with $\bK\le \bN$ we have 
$$
\frac{1}{m}\sum_{\nu=1}^m\bigl||f(\xi^\nu)|^q-|f(\gamma^\nu)|^q\bigr| \le C(d,q)(K_j/N_j)\|f\|_q^q.
$$
\end{Lemma}
\begin{proof} Using the simple relation $v^q-u^q = qc^{q-1}(v-u)$ with $c\in [u,v]$ for numbers $0\le u\le v$
we write
$$
\frac{1}{m}\sum_{\nu=1}^m\bigl||f(\xi^\nu)|^q-f(\gamma^\nu)|^q\bigr| \le \frac{q\pi}{2mN_j}\sum_{\nu=1}^m|f'_{x_j}(\alpha^\nu)||f(\beta^\nu)|^{q-1}
$$
with $\alpha^\nu$ and $\beta^\nu$ belonging to the same $\Delta(\bn)$ as $\xi^\nu$ and 
$\gamma^\nu$. Using H{\" o}lder inequality with parameters $q$ and $q'$ we continue the above inequality
$$
\le \frac{q\pi}{2N_j}\left(\frac{1}{m}\sum_{\nu=1}^m|f'_{x_j}(\alpha^\nu)|^q\right)^{1/q}\left(\frac{1}{m}\sum_{\nu=1}^m|f(\beta^\nu)|^q\right)^{(q-1)/q}.
$$
Functions $f'$ and $f$ are from $\Tr(\bK)\subset \Tr(\bN)$ and sequences $\{\alpha^\nu\}_{\nu=1}^m$ and $\{\beta^\nu\}_{\nu=1}^m$ satisfy the conditions of Theorem \ref{T3.2}. Thus, we continue
\be\label{3.6}
\le  \frac{C(d,q)}{2N_j}\|f'_{x_j}\|_q\|f\|_q^{q-1}.
\ee
Using the Bernstein inequality we obtain $\|f'_{x_j}\|_q\le K_j\|f\|_q$. Substituting it into (\ref{3.6}) we complete the proof of Lemma \ref{L3.4}. 

\end{proof}

We now complete the proof of Theorem \ref{T3.1}. As in the proof of Theorem \ref{T2.1} we 
use the $(t,r,d)$-nets with an appropriate $r$. Then Theorem \ref{T3.2} gives us an upper bound for the discrete norm. For the lower bound we use the net $\{\bx(\bn), \bn\in P'(\bN)\}$ 
with an appropriate $\bN$, which satisfies the desired lower bound by (\ref{1.4'}). Then we use Lemma \ref{L3.4} with appropriate $\bK$ to derive from (\ref{1.4'}) the lower bound for 
the set build with a help of the $(t,r,d)$-nets. We now proceed to the detailed argument.

 Let $t$ be such that $t\le t(d)$ and the $(t,r,d)$-net exists. A construction of such nets for all $d$ and $t\ge Cd$, $r\ge t$  is given in \cite{NX}. For a given $n\in \N$ define $r:= n+t+a(d,q)d$ with $a(d,q)\in\N$, which will be specified later. Consider the $(t,r,d)$-net $T(t,r,d)$. Denote $T_\pi(t,r,d):= 2\pi T(t,r,d):=\{2\pi \bx: \bx\in T(t,r,d)\}$. Take an $\bs\in \Z^d_+$ such that $\|\bs\|_1=n$ and consider $\bs'$ with $s_j' := s_j+a(d,q)$. Consider a dyadic box of the form 
 \be\label{3.7}
 [2\pi(a_1-1)2^{-s'_1},2\pi a_12^{-s'_1})\times\cdots\times[2\pi(a_d-1)2^{-s'_d},2\pi a_d2^{-s'_d}).
\ee 
 The volume of this box is equal to
$(2\pi)^d2^{t-r}$. Specify $\bN:= 2^{\bs'-{\mathbf 2}}$ and $\bK:= 2^\bs$. Then each $\Delta(\bn)$, $\bn\in P'(\bN)$, is a dyadic box of the above form (\ref{3.7}). 
Therefore, for each $\bn\in P'(\bN)$, by the definition of the $T_\pi(t,r,d):=\{\xi^\nu\}_{\nu=1}^{2^r}$, there are $2^t$ points $\xi^{\nu}$ from the box $\Delta(\bn)$. We plan to apply Lemma \ref{L3.4} and define the set $\Gamma_m$ in such a way that each $\bx(\bn)$, $\bn\in P'(\bN)$, is included in $\Gamma_m$ exactly $2^t$ times. Then from the definition of $\bs'$ it follows that
$$
K_j/N_j = 2^{2-a(d,q)}, \quad j=1,\dots,d.
$$
Certainly, we can make $a(d,q)$ large enough to get from Lemma \ref{L3.4} the inequality
$$
\frac{1}{m}\sum_{\nu=1}^m\bigl||f(\xi^\nu)|^q-|f(\gamma^\nu)|^q\bigr| \le C_1(d,q)\|f\|_q^q/2
$$ 
where $C_1(d,q)$ is from (\ref{1.4'}). This completes the proof of the lower bound in Theorem \ref{T3.1}.
The upper bound follows directly from Theorem \ref{T3.2}.
 
 \end{proof}
 
 \section{Discussion}\label{D}
 Theorems \ref{T2.1} and \ref{T3.1} give sets of cardinality $\le C(d,q)2^n$, which provide 
 universal discretization in $L_q$, $1\le q\le \infty$, for the collection $\cC(n,d)$. This, basically, solves the Universal discretization problem for the collection $\cC(n,d)$, because, 
 obviously, the lower bound for the cardinality of a set, providing the Marcinkiewicz discretization theorem for $\Tr(R(\bs))$ with $\|\bs\|_1=n$, is $\ge C(d)2^n$. 
 
 In Section \ref{Inf} we discussed the idea of using the set, which provides the Marcinkiewicz discretization theorem for $\Tr(Q_n)$ as the one providing universal discretization for the collection $\cC(n,d)$. We pointed out there that the known results on the Marcinkiewicz discretization theorem for $\Tr(Q_n)$ in $L_\infty$ show that this idea does not work well in case $L_\infty$ -- we loose an extra factor $m^c$, $c>0$. We now discuss an application of this idea in the case of $L_q$, $1\le q<\infty$. We begin with the case $q=2$. The following theorem is obtained in \cite{VT160}.
\begin{Theorem}\label{T4.1} There are three positive absolute constants $C_1$, $C_2$, and $C_3$ with the following properties: For any $d\in \N$ and any $Q\subset \Z^d$   there exists a set of  $m \le C_1|Q| $ points $\xi^j\in \T^d$, $j=1,\dots,m$ such that for any $f\in \Tr(Q)$ 
we have
$$
C_2\|f\|_2^2 \le \frac{1}{m}\sum_{j=1}^m |f(\xi^j)|^2 \le C_3\|f\|_2^2.
$$
\end{Theorem}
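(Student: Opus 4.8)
The plan is to translate the pair of inequalities, via Parseval's identity, into a spectral statement about a $|Q|\times|Q|$ positive semidefinite matrix, and then to invoke a deep deterministic frame‑subsampling theorem. Write $f=\sum_{\bk\in Q}c_\bk e^{i(\bk,\bx)}$ and normalise the measure on $\T^d$ so that the exponentials are orthonormal; then $\|f\|_2^2=\sum_{\bk\in Q}|c_\bk|^2=\|c\|_{\ell_2}^2$, and for $u(\xi):=\bigl(e^{i(\bk,\xi)}\bigr)_{\bk\in Q}\in\bbC^Q$ we have $|f(\xi^\nu)|^2=c^*u(\xi^\nu)u(\xi^\nu)^*c$ with $\|u(\xi)\|^2=|Q|$ for every $\xi$. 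Hence for any point system $\{\xi^\nu\}_{\nu=1}^m$ the required two‑sided bound is exactly the operator inequality
$$
C_2\,I\preceq G_m:=\frac1m\sum_{\nu=1}^m u(\xi^\nu)u(\xi^\nu)^*\preceq C_3\,I \quad\text{on }\bbC^Q .
$$
By orthogonality $\int_{\T^d}u(\xi)u(\xi)^*\,d\mu(\xi)=I$, so the task is to approximate this averaged rank‑one resolution of the identity by a short equal‑weight one, with constants independent of $d$ and $Q$.

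First I would realise the identity as a uniform average of a finite family of rank‑one matrices of small norm. Choosing $\bN$ with $Q\subset\Pi(\bN)$ and $\vartheta(\bN)$ as large as we like, relation (\ref{b1.5}) gives the exact cubature $\vartheta(\bN)^{-1}\sum_{\bn\in P(\bN)}u(\bx^\bn)u(\bx^\bn)^*=I$, in which each summand has operator norm $|Q|/\vartheta(\bN)$, arbitrarily small. Setting $v_\bn:=\vartheta(\bN)^{-1/2}u(\bx^\bn)$ puts us in the model situation $\sum_\bn v_\bn v_\bn^*=I$, $\|v_\bn\|^2=|Q|/\vartheta(\bN)\ll1$.

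The heart of the proof is to pass from this long (flat) tight frame to a multiset of only $O(|Q|)$ of the points $\bx^\bn$ whose equal‑weight frame operator still has a bounded condition number. Here plain i.i.d. sampling combined with a matrix Chernoff/Bernstein inequality would only deliver $O(|Q|\log|Q|)$ points, so the removal of the logarithm is the deep input: one applies the Batson--Spielman--Srivastava \emph{twice-Ramanujan} sparsification theorem (equivalently, the Marcus--Spielman--Srivastava resolution of the Kadison--Singer/Weaver conjecture) to select at most $C_1|Q|$ of the $\bx^\bn$ with nonnegative weights whose weighted frame operator has spectrum in a fixed window $[c,C]$ depending on neither $d$ nor $Q$. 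It then remains to replace this weighted cubature by an equal‑weight one: since in the present flat situation the retained weights can be taken mutually comparable and the rank‑one summands $|Q|/\vartheta(\bN)$ are tiny, rounding each weight to an integer multiple of $1/m$ for a suitable $m\le C_1|Q|$ and repeating each retained point that many times perturbs $G_m$ in operator norm by less than $c/2$, which yields $m\le C_1|Q|$ together with absolute constants $C_2,C_3$.

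I expect the main obstacle to be exactly this suppression of the logarithmic factor: for a general frequency set $Q$ there is no explicit good point system, and the only known way to reach $O(|Q|)$ points with constants independent of $d$ and $Q$ is through the spectral‑sparsification / Kadison--Singer machinery. A secondary technical point is the bookkeeping required to turn the weighted quadrature produced by that machinery into the plain averages $\frac1m\sum_\nu|f(\xi^\nu)|^2$ of the statement (with repetitions of points allowed), which forces one to keep both the individual rank‑one summands and the weights under control throughout the argument.
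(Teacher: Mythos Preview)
Your overall route is the one the paper takes: Theorem~\ref{T4.1} is not proved here but quoted from \cite{VT160}, where it is deduced from Nitzan--Olevskii--Ulanovskii \cite{NOU}, which in turn rests on Marcus--Spielman--Srivastava \cite{MSS}. The reduction to the operator inequality $C_2 I\preceq G_m\preceq C_3 I$ on $\bbC^Q$ and the passage to a finite flat tight frame via the exact cubature (\ref{b1.5}) match that argument.

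The gap is in your final step. Batson--Spielman--Srivastava sparsification and the MSS solution of Kadison--Singer are \emph{not} equivalent, and the difference is precisely what matters here. BSS returns at most $C_1|Q|$ vectors together with nonnegative weights that are in general \emph{not} mutually comparable; nothing in that theorem justifies your assertion that they can be so taken. If you then round the weights $w_\bn$ to $k_\bn/m$, the operator-norm perturbation is controlled only by $|Q|\sum_{\bn\in S}|w_\bn-k_\bn/m|$, and with $|S|\asymp|Q|$ summands and rounding error $O(1/m)$ each this is of order $|Q|^2/m$; keeping it below $c/2$ forces $m\gtrsim|Q|^2$, not $m\le C_1|Q|$. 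The smallness $|Q|/\vartheta(\bN)$ of the original rank-one pieces is irrelevant once the selection has been made, since the surviving terms $w_\bn\,u(\bx^\bn)u(\bx^\bn)^*$ have norm $w_\bn|Q|$ of order one. What \cite{NOU} (and hence \cite{VT160}) actually use is MSS in its \emph{partition} form: Weaver's conjecture applied to the flat frame $\{v_\bn\}_{\bn\in P(\bN)}$ yields a partition of $P(\bN)$ into $r\asymp\vartheta(\bN)/|Q|$ blocks, each with \emph{unweighted} frame operator uniformly comparable to $I/r$. Some block $S_i$ then has cardinality $|S_i|\le C_1|Q|$, and for it the equal-weight average $|S_i|^{-1}\sum_{\bn\in S_i}u(\bx^\bn)u(\bx^\bn)^*$ already sits in a fixed spectral window --- no rounding is needed.
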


In \cite{VT160} we showed how to derive Theorem \ref{T4.1} from the recent paper by  S. Nitzan, A. Olevskii, and A. Ulanovskii \cite{NOU}, which in turn is based on the paper of A. Marcus, D.A. Spielman, and N. Srivastava \cite{MSS}. 
 Theorem \ref{T4.1}, basically, solves the problem of the Marcinkiewicz-type discretization theorem for the $\Tr(Q_n)$ in the $L_2$ case.
The reader can find some more discussion of the $L_2$ case in 
\cite{VT161}. We also refer to the paper \cite{Ka} for a discussion of a
recent outstanding progress in the area of submatrices of orthogonal matrices. 
In particular, in the case of $Q=Q_n$ Theorem \ref{T4.1} provides a set of  $m \le C_1|Q_n| \le C(d)2^nn^{d-1}$ points $\xi^j\in \T^d$, $j=1,\dots,m$, such that for any $f\in \Tr(Q_n)$ 
we have
$$
C_2\|f\|_2^2 \le \frac{1}{m}\sum_{j=1}^m |f(\xi^j)|^2 \le C_3\|f\|_2^2.
$$
Thus, this set with cardinality $\le C(d)2^nn^{d-1}$ is universal for discretization in $L_2$ of the collection $\cC(n,d)$. It shows that on this way we only loose an extra factor $(\log m)^{d-1}$ in the cardinality of the set. It is clear that we cannot do better on this way because dim$(\Tr(Q_n)) \ge C(d)2^nn^{d-1}$. 

Let us discuss the case $L_1$.  The following Marcinkiewicz-type theorem for discretization of the $L_1$ norm of polynomials from $\Tr(Q)$ was proved in \cite{VT161}. 
\begin{Theorem}\label{T4.2} There is large enough constant $C_1(d)$ with the property: For any $Q\subset \Pi(\bN)$ with $\bN=(2^n,\dots,2^n)$
 there exists a set of $m \le C_1(d)|Q|n^{7/2}$ points $\xi^j\in \T^d$, $j=1,\dots,m$ such that for any $f\in \Tr(Q)$ 
we have
$$
\frac{1}{2}\|f\|_1 \le \frac{1}{m}\sum_{j=1}^m |f(\xi^j)| \le \frac{3}{2}\|f\|_1.
$$
\end{Theorem}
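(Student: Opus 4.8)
\medskip\noindent\textit{Sketch of a proof.}
The plan is to build $\Xi_m$ by a purely probabilistic construction and to prove the two‑sided inequality, in the sharp form with the constants $1/2$ and $3/2$, as a single concentration statement that is uniform over the ($L_1$‑scale‑invariant) unit ball of $\Tr(Q)$. Let $\xi^1,\dots,\xi^m$ be independent and uniformly distributed on $\T^d$. For every fixed $f\in\Tr(Q)$ one has $\bE\bigl(\tfrac1m\sum_{\nu=1}^m|f(\xi^\nu)|\bigr)=\|f\|_1$ exactly, so it is enough to show that, for $m\ge C_1(d)|Q|n^{7/2}$ with $C_1(d)$ large, with positive probability
$$
\eta:=\sup_{f\in\Tr(Q),\ \|f\|_1\le 1}\Bigl|\tfrac1m\sum_{\nu=1}^m|f(\xi^\nu)|-\|f\|_1\Bigr|\le\tfrac12 .
$$
Since the expression inside the supremum is homogeneous of degree one in $f$, the single bound $\eta\le 1/2$ is equivalent to $\tfrac12\|f\|_1\le\tfrac1m\sum_\nu|f(\xi^\nu)|\le\tfrac32\|f\|_1$ for all $f\in\Tr(Q)$ simultaneously; in particular it yields both the lower and the upper estimate of the theorem.

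\medskip\noindent
First I would remove the absolute value by symmetrisation and the contraction principle. With Rademacher signs $\varepsilon_1,\dots,\varepsilon_m$ independent of the $\xi^\nu$, symmetrisation bounds $\bE\eta$ by $2\,\bE\sup_f\bigl|\tfrac1m\sum_\nu\varepsilon_\nu|f(\xi^\nu)|\bigr|$, and since $t\mapsto|t|$ is $1$‑Lipschitz and vanishes at $0$, the Ledoux--Talagrand contraction principle replaces $|f(\xi^\nu)|$ by $f(\xi^\nu)$ at the cost of an absolute constant. Writing $D_Q(\bx):=\sum_{\bk\in Q}e^{i(\bk,\bx)}$ for the Dirichlet‑type kernel that reproduces $\Tr(Q)$, so that $f=f\ast D_Q$ and $f(\xi^\nu)=(2\pi)^{-d}\int_{\T^d}f(\by)D_Q(\xi^\nu-\by)\,d\by$, one gets $\tfrac1m\sum_\nu\varepsilon_\nu f(\xi^\nu)=(2\pi)^{-d}\int f(\by)g_\varepsilon(\by)\,d\by$ with $g_\varepsilon(\by):=\tfrac1m\sum_\nu\varepsilon_\nu D_Q(\xi^\nu-\by)$, whence $\sup_{\|f\|_1\le 1}\bigl|\tfrac1m\sum_\nu\varepsilon_\nu f(\xi^\nu)\bigr|\le\|g_\varepsilon\|_\infty$. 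Thus everything reduces to estimating $\bE_\xi\bE_\varepsilon\bigl\|\tfrac1m\sum_{\nu=1}^m\varepsilon_\nu D_Q(\,\cdot\,-\xi^\nu)\bigr\|_\infty$.

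\medskip\noindent
The polynomial $h_\varepsilon:=\tfrac1m\sum_\nu\varepsilon_\nu D_Q(\,\cdot\,-\xi^\nu)$ has frequencies in $Q\subset\Pi(\bN)$ with $\bN=(2^n,\dots,2^n)$, so by the Bernstein‑inequality argument behind Proposition~\ref{P2.1} its sup‑norm is at most $2\max_{\bx\in G}|h_\varepsilon(\bx)|$ for a fixed grid $G$ of mesh $\asymp(d\,2^n)^{-1}$, so $|G|\asymp 2^{nd}$ and $\log|G|\asymp n$ (for fixed $d$). For a fixed $\bx\in G$, $h_\varepsilon(\bx)$ is a Rademacher sum with sub‑Gaussian norm $\lesssim\tfrac1m\bigl(\sum_\nu|D_Q(\bx-\xi^\nu)|^2\bigr)^{1/2}$, so the maximal inequality for sub‑Gaussian variables gives $\bE_\varepsilon\|h_\varepsilon\|_\infty\lesssim\sqrt n\cdot\tfrac1m\max_{\bx\in G}\bigl(\sum_\nu|D_Q(\bx-\xi^\nu)|^2\bigr)^{1/2}$, where the implied constant may depend on $d$. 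Now $\bE_\xi\sum_\nu|D_Q(\bx-\xi^\nu)|^2=m\|D_Q\|_2^2=m|Q|$ for each $\bx$, and a Bernstein estimate in the $\xi^\nu$ (using $\|D_Q\|_\infty\le|Q|$ and $\|D_Q\|_4^4\le|Q|^3$) together with a union bound over $G$ shows that, once $m\ge c(d)|Q|n$, one has $\max_{\bx\in G}\sum_\nu|D_Q(\bx-\xi^\nu)|^2\le 2m|Q|$ off an event of probability $\le 2^{-2nd}$, on which $\|h_\varepsilon\|_\infty\le|Q|$ trivially and contributes nothing to the expectation. Collecting constants, $\bE\eta\le C(d)\sqrt{|Q|\,n/m}$, which is $\le 1/4$ as soon as $m\ge C_1(d)|Q|n$ with $C_1(d)$ large; hence $m\ge C_1(d)|Q|n^{7/2}$ works with room to spare, and Markov's inequality applied to $\eta$ gives a deterministic $\Xi_m$ with $\eta\le 1/2$, which is exactly the assertion.

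\medskip\noindent
The main obstacle is the third step: the randomness is nested, and one must pass from the \emph{pointwise} identity $\bE_\xi\sum_\nu|D_Q(\bx-\xi^\nu)|^2=m|Q|$ to a bound on $\max_{\bx}\sum_\nu|D_Q(\bx-\xi^\nu)|^2$ that is \emph{uniform in $\bx$} and holds with overwhelming probability; this, together with the discretisation of $\|h_\varepsilon\|_\infty$ over a grid whose cardinality is exponential in $n$ and the careful tracking of the (absolute and $d$‑dependent) constants so that the total error falls below $1/2$, is what the argument turns on. The remaining ingredients — symmetrisation, the contraction principle, Khintchine's inequality and Bernstein's inequality for trigonometric polynomials — are standard. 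It is likely that the stated exponent $7/2$, rather than the exponent $1$ that the sketch above suggests, reflects a more economical but lossier bookkeeping, e.g. controlling $\|h_\varepsilon\|_\infty$ and the variance profile by chaining/entropy estimates, or by reducing to a conditional discretisation theorem governed by the entropy numbers of the $L_1$‑unit ball of $\Tr(Q)$ in $L_\infty$.
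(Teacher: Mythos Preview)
The paper does not prove Theorem~\ref{T4.2}; the result is imported from \cite{VT161} and appears in Section~\ref{D} only as a point of comparison with the direct $(t,r,d)$-net constructions of Sections~\ref{Inf} and~\ref{Lq}. There is therefore no ``paper's own proof'' to weigh your attempt against.

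That said, your sketch is a sound route to the statement and is in the same spirit as the probabilistic argument in \cite{VT161}: random points, symmetrisation, reduction to a sup-norm of a random Dirichlet-type sum, and a concentration/union-bound step. Two remarks. First, the contraction step as written assumes real-valued $f$; since $\Tr(Q)$ consists of complex trigonometric polynomials you should split into real and imaginary parts (or into $\Tr(Q)\cap\{\text{real-valued}\}$ by pairing $\bk$ with $-\bk$) before invoking Ledoux--Talagrand, and then recombine. This is routine but should be said. Second, your control of $\max_{\bx\in G}\sum_\nu|D_Q(\bx-\xi^\nu)|^2$ via scalar Bernstein plus a union bound over $|G|\asymp 2^{nd}$ grid points is exactly the place where the accounting can be tight or loose: your version legitimately yields $m\ge C(d)|Q|\,n$, which is stronger than the stated $|Q|\,n^{7/2}$. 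The argument in \cite{VT161} proceeds instead through chaining/entropy estimates for the $L_1$-unit ball of $\Tr(Q)$ in $L_\infty$, which is lossier in the power of $n$ but more portable; your closing paragraph already anticipates this correctly.

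In short, there is no genuine gap: the approach is correct, and the improved exponent is not an error but a consequence of exploiting the hypothesis $Q\subset\Pi(2^n,\dots,2^n)$ more directly (via the Bernstein inequality and a coarse grid) than a generic entropy argument does.
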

This theorem applied to $\Tr(Q_n)$ gives a set of cardinality $m \le C_1(d)|Q_n|n^{7/2}$, which provides universal discretization in $L_1$ for the collection $\cC(n,d)$. Thus, on the way of using the above idea we only loose an extra factor $(\log m)^{d+5/2}$ in the cardinality of the set.

In the case $q\neq 2$ and $q\neq 1$ the known results on the Marcinkiewicz-type theorem 
for the $\Tr(Q_n)$ are weaker than in the case $q=2$ or $q=1$. In this case known results from \cite{VT160} only give us results with an extra power factor $m^{1-1/q}$.

\end{document}